\newtheorem{theorem}{\bf Theorem}
\newtheorem{definition}{\bf Definition}
\newtheorem{lemma}{\bf Lemma}
\DeclareMathOperator*{\argminB}{argmin} 
\title{\LARGE \bf
Linearly-Solvable Mean-Field Approximation for Multi-Team Road Traffic Games 
}
\author{Ali Reza Pedram \and  Takashi Tanaka
\thanks{AR. Pedram is with Mechanical Engineering,
        University of Texas at Austin, TX, USA
        {\tt\small apedram@utexas.edu}}%
\thanks{T. Tanaka is with Department of Aerospace Engineering and Engineering
Mechanics, University of Texas at Austin, TX, USA
        {\tt\small ttanaka@utexas.edu}}%
}
\begin{document}

\maketitle
\thispagestyle{empty}
\pagestyle{empty}

\begin{abstract}
We study the traffic routing game among a large number of selfish drivers over a traffic network. We consider a specific scenario where the strategic drivers can be classified into teams, where drivers in the same team have identical payoff functions. An incentive mechanism is considered to mitigate congestion, where each driver is subject to dynamic tax penalties. We explore a special case in which the tax is affine in the logarithm of the number of drivers selecting the same route from each team.  It is shown via a mean-field approximation that a Nash equilibrium in the limit of a large population can be found by linearly solvable algorithms. 
\end{abstract}

\section{Introduction}

Transportation is a major energy consuming sector in the United States, accounting for 28\% of total use and 26\% of green-house emissions \cite{jang2018simulation}. The economic loss due to traffic congestion is also significant; in 2014, it was estimated to account for $\$$160 billion \cite{US}. Recently, new stakeholders such as ride-hailing companies and mobile routing apps are reported to have substantial impacts on urban traffic networks \cite{herrera2010evaluation}. Therefore, novel theoretical frameworks for traffic analysis and control in which ride-hailing companies act as decision-makers are urgently needed.


A traffic system can be analyzed by the theory of dynamic games, where the non-cooperative drivers compete over the shared network \cite{krichene2018learning, fisac2018hierarchical}. Strategic behaviour of individuals often results in a game theoretic equilibrium that is not necessarily socially optimal. In the literature, the loss of optimality (efficiency loss) is commonly measured in terms of \textit{the price of anarchy (PoA)} \cite{youn2008price, mehr2018can}. POA can be improved by an appropriate incentive design; \cite{brown2018optimal,cole2003pricing} considered dynamic toll mechanisms, which are assumed to be operated by a Traffic System Operator (TSO).


It should be noted that incentive mechanisms operated by TSOs are restricted by the information available to them. 
For example, \cite{karakostas2004edge,fleischer2004tolls} consider fixed taxation for each route based on the full characterization of the network topology and user demands. In contrast, \cite{sandholm2002evolutionary} proposed a tax structure which is independent of this information. A complete review of the design and evaluation of road network pricing schemes is provided in \cite{tsekeris2009design}. 

Traffic systems in reality typically consist of a large number of  strategic drivers and
their game-theoretic analysis is often computationally challenging. Therefore, an incentive mechanism should ideally be designed in such a way that (i) its implementation is computationally simple so that it is scalable to large networks, and (ii) there exists a simple computational procedure to find an equilibrium in the resulting dynamic game. If the later condition is not satisfied, it is questionable that drivers in reality can ever take the equilibrium strategy. 


To simplify the analysis of routing games with a large number of drivers, a common approach is to adopt a macroscopic perspective, where the density of the vehicles is modeled rather than the dynamics of individual vehicles \cite{bellomo2011modeling}.  In a similar spirit, recently the framework of Mean Field Game (MFG)  \cite{chevalier2015micro} was applied to compute the traffic density propagation induced from the interactions of many independent strategic drivers. Despite the substantial progress in MFGs, the number of papers utilize the MFG theory for macroscopic traffic modelling is still limited. Some recent studies have explored MFGs with multiple classes of drivers \cite{tembine2011mean}, which is appropriate to model traffic games. Additionally, some papers have considered scenarios with a major player and a large number of minor players \cite{huang2012mean}.

In MFG setting, the traffic flow is modeled as a fluid whose behavior can be obtained by solving  Hamilton-Jacobi-Bellman (HJB) equation coupled with the standard conservation law applied to the vehicles, refered to as Fokker-Planck-Kolmogrov (FPK) equation. In \cite{festa2018mean}, an MFG approach is taken to address multi-lane traffic
management, using a semi-Lagrangian scheme to solve an approximation of a coupled HJB-FPK.

In \cite{tanaka2018linearly,tanaka2019linearly}, a discrete-time dynamic
stochastic game over an urban network is studied, wherein at each intersection, each strategic driver selects one of the outgoing links randomly with respect to his/her mixed policy. A tax mechanism is offered which yields a linearly solvable game under the assumptions that all drivers have common origin and destination (O/D) and travel costs. Then, the backward HJB and forward FPK equations can be solved independently and thus a MFE is obtained through a linearly-solvable set of equations. This result \cite{tanaka2018linearly} has an important computational advantage with respect to previous results \cite{chevalier2015micro}  which dealt with a coupled set of HBJ-FPK equations. However, the simplified problem setting in \cite{tanaka2018linearly} with shared pair of (O/D) and travel cost has a limited applicability to practical traffic routing problems. To address this gap, in this paper, we consider more realistic settings, in which there exist multiple teams with different (O/D) pairs and travel costs. As the main technical contribution of this paper, a Nash equilibrium (NE) in the limit of a large number of drivers (MFE) for the multi-team setting is given as the solution to a set of linearly solvable optimal control problems.

This paper is organized as follows: The multi-team road traffic game is set up in Section \ref{probdef}, the behavior of the game in large population limit is studied in Section \ref{ll}. An auxiliary set of control problems is introduced in Section \ref{threeb} which is used to solve the game in the large population limit in Section \ref{threec}. The game with finite number of drivers is considered in Section \ref{gamedef}. 
Numerical studies are summarized in Section \ref{NI} before we conclude in Section \ref{conc}.
\section{Problem Formulation}
\label{probdef}
We assume there is a large number of drivers traveling over a shared network $\mathcal{G} = (\mathcal{V}, \mathcal{E})$, referred to as the traffic graph, where $\mathcal{V} = \{1, 2, \dots, V \}$ is the set of nodes (intersections) and $\mathcal{E} = \{1, 2, \dots, E\}$ is the set of directed edges (links). The set of nodes to which there exist a direct link from $i$ is labeled by $\mathcal{V}(i)$. We  assume that drivers are categorized into $L$ teams, and that drivers from the same team have the same (O/D) pair. The number of drivers in each team is denoted by $N_l \quad \forall l\in\{1, 2, \dots, L\}$. At each time step $t \in \mathcal{T} = \{0, 1, \dots, T-1\}$, the node at which the $n$-th driver from the $l$-th team ($n\leq N_l$) is located is represented by $i_{n,l,t} \in \mathcal{V}$.

\subsection{Routing policy}
For each $n$ and $l$, $i_{n,l,t}$ is a random  process. Let $P_{n,l,t} = \{P^i_{n,l,t}\}_{i\in \mathcal{V}}$ be the probability distribution of the individual driver $n$ in the team $l$ and  over the nodes of the traffic graph at time $t$. At $t = 0$, we assume all drivers in the same team will start from a common probability $P_{l,0}$; however, $i_{n,l,0}$ for each $n$ and $l$ is realized independently with respect to $P_{l,0}$.

For $(n_1,l_1)\neq (n_2,l_2)$, we assume $i_{n_1,l_1,t}$ and $i_{n_2,l_2,t}$ are independent random variables. At every time step, driver $(n,l)$ selects an  action $j_{n,l,t} \in \mathcal{V}(i_{n,l,t})$, and moves to the node $j_{n,l,t}$ at time $t+1$ (i.e. $i_{n,l,t+1} = j_{n,l,t})$.
Each driver would follow a randomized policy (strategy) $Q^i_{n,l,t} = \{Q
^{ij}_{n,l,t}\}_{j\in\mathcal{V}(i)}$, which represents the  probability distribution according to which she chooses her next destination. Let $\Delta^J$ be the $J$-dimensional probability simplex. Then for all drivers $(n,l)$ at time $t$,  $Q_{n,l,t}=\{Q^{i}_{n,l,t}\}_{i\in\mathcal{V}} $  belongs to the space of possible mixed strategies $\mathcal{Q}$, where
\begin{equation*}
\begin{split}
    \mathcal{Q}=&\Big\{ 
\{Q^{i}\}_{i\in \mathcal{V}}: Q^{i} \in \Delta^{|\mathcal{V}(i)|-1}\ \  \forall i \in \mathcal{V}
\Big\}.
\end{split}
\end{equation*}
We assume that each driver  fixes  her strategy $\{Q_{n,l,t}\}_{t\in \mathcal{T}}$, at $t=0$ based on the global knowledge the game's parameters and she will not be able to update it during the game. In this setting, the location probability distribution for driver ($n,l$) is computed recursively by
\begin{equation}
\label{dyn}
P^{j}_{n,l,t+1}=\sum_{i} P_{n,l,t}^i \  Q_{n,l,t}^{ij}, 
\end{equation}
with initial ${P}_{n,l,t}=P_{l,0}$. Further details of the game setup are given below. Note that they are natural generalizations of the setup studied in \cite{tanaka2018linearly} to multi-team scenarios.  
\subsection{Cost Functions and Congestion-reducing Incentives} 
 Each driver is subjected to two different categories of costs-- travel cost and congestion cost.
 \subsubsection{Travel cost}
 Moving from node $i$ to $j$ requires a fixed amount of cost e.g., fuel cost. Let $C^{ij}_{l,t}$ be a given cost for drivers in team $l$ to take this action at time $t$.
 
 \subsubsection{Congestion cost}
 This is the tax cost imposed by the TSO to incentivize drivers to adopt a nominal routing policy, which we assume it is pre-specified. The nominal policy is denoted by ${R}^{ij}_t$, representing the probability of selecting the next destination $j$ at node $i$ at time step $t$. Namely, ${R}^{ij}_t$ satisfies ${R}^{ij}_t \geq 0$ and $\sum_j {R}^{ij}_t=1 $. To penalize the deviation from this reference distribution, TSO introduces the log-population tax mechanism in which $(n,l)$-th driver taking action $j$ at time $t$ from node $i$ will be charged the value of  
\begin{equation}
\label{tax-mech}
\begin{split}
&\pi^{ij}_{n,l,t}=\sum_{m=1}^L a_{lm} \Big(\log( \frac{K^{ij}_{m,t}}{K^i_{m,t}})- \log R^{ij}_t
   \Big),
   \end{split}
\end{equation}
where $K^i_{m,t}$ denotes the population of drivers (including driver $(n,l)$ in the case of $m=l$) from each team in node $i$ at time $t$. Similarily,  $K^{ij}_{m,t}$  represents the number of drivers (from team $m$) who are at node $i$ at time $t$ and willing to take the action $j$. We consider the tax formula (\ref{tax-mech}) mainly because (i) it is a natural extension of the mechanism in \cite{tanaka2018linearly} and (ii) the  resulting game is efficiently solvable, as we will see in Section \ref{three}.

The tax mechanism (\ref{tax-mech}) implies that the driver $(n,l)$ selecting action $j$ will be penalized if the fraction of drivers taking this action is greater than the desired value and it will be rewarded otherwise. The tax penalty (\ref{tax-mech}) is the weighted sum of charges associated with individual teams' deviation from the nominal routing policy. The parameter $ a_{lm} >0$ means that the drivers in team $l$ will be penalized by selecting action $j$ at node $i$ if the road from $i$ to $j$ is going to be overpopulated by the drivers from team $m$. Notice that  $K^i_{m,t}\geq K^{ij}_{m,t} \geq 0$ by construction. We adopt the convention that $\log(\frac{0}{0})=0$. In this setting, each driver is trying to minimize the expected value of the tax cost i.e., $\Pi_{n,l,t}^{ij}\triangleq E [ \pi^{ij}_{n,l,t}|i_{n,l,t}, j_{n,l,t}]$ as well as fixed costs. In summary, each driver is willing to minimize her own charge selfishly by solving:
\begin{equation}
\label{game}
\begin{split}
     \min_{\{Q_{n,l,t}\}_{0}^{T-1}} & \sum_{t=0}^{T-1} \sum_{i, j} P_{n,l,t}^i  Q_{n,l,t}^{ij}(C_{n,l,t}^{ij}+\Pi^{ij}_{n,l,t})\\
\text{s.t } \quad & \sum_{j}  Q_{l,t}^{ij} =1\quad \forall i.
\end{split}
\end{equation}
Notice that (\ref{game}) is a game involving $\sum_{l=1}^L N_l$ players, since the value of  $\Pi_{n,l,t}^{ij}$ depends on the strategies of all other drivers in the system. Analyzing this game is a nontrivial task. However, if the number of drivers is sufficiently large, the game is well-approximated by the one with infinitely many drivers. In the next section, we develope such an approximation via the MFG theory.
\section{ Main Results}
\label{three}
In this section, we study the MFG approximation of the game with a large number of drivers. Then, an efficient algorithm is provided to calculate the MFE of the game.  

\subsection{ Large population limit}
\label{ll}
In \cite{tanaka2018linearly}, for the case of single team,
an explicit expression for $\lim _{N\rightarrow \infty } \Pi_{n,t}$ is obtained. Here, we provide a generalized alternative method to find an analogous limit for the multi-team case.

Due to the indistinguishability of drivers in a team, we can restrict ourselves to the symmetric setups where drivers in a team are sharing a common policy $Q_{l}=\{Q^i\}_{i\in \mathcal{V}, t\in \mathcal{T}, l\in \mathcal{N}_l}$. Denoting the adapted strategy by $Q^{*}_l$, one can calculate the induced probability distribution $P^{*}_l$, based on (\ref{dyn}). In what follows, we study the asymptotic behavior of the congestion charge in the large population limit (i.e., $N = \sum_m N_m \rightarrow \infty$ for the fixed population ratios $\frac{N_l}{N}$).

\begin{lemma}
\label{lemma1}
 If ${P}^{i*}_{l,t}Q^{ij*}_{l,t}>0$ $\forall l\in\{1, 2, \dots, L\}$, then
\begin{equation}
\lim_{ N\rightarrow \infty} \Pi^{ij}_{n,l,t}= \sum_{m=1}^L a_{lm} \log \frac{Q_{m,t}^{ij*}}{R_{t}^{ij}}.    
\end{equation}
\end{lemma}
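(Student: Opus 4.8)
The plan is to evaluate $\Pi^{ij}_{n,l,t}=E[\pi^{ij}_{n,l,t}\mid i_{n,l,t}=i,\,j_{n,l,t}=j]$ directly from (\ref{tax-mech}). By linearity of the conditional expectation it decomposes into single-team contributions,
\begin{equation*}
\Pi^{ij}_{n,l,t}=\sum_{m=1}^{L}a_{lm}\Big(E\big[\,\log\tfrac{K^{ij}_{m,t}}{K^{i}_{m,t}}\ \big|\ i_{n,l,t}=i,\ j_{n,l,t}=j\,\big]-\log R^{ij}_{t}\Big),
\end{equation*}
so it suffices to show that, for each team $m$, $E[\log(K^{ij}_{m,t}/K^{i}_{m,t})\mid i_{n,l,t}=i,\,j_{n,l,t}=j]\to\log Q^{ij*}_{m,t}$ as $N\to\infty$. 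Writing $K^{i}_{m,t}=\sum_{n'}\mathbf{1}\{i_{n',m,t}=i\}$ and $K^{ij}_{m,t}=\sum_{n'}\mathbf{1}\{i_{n',m,t}=i,\,j_{n',m,t}=j\}$, under the common symmetric policy $Q^{*}_{m}$ and the induced marginal $P^{*}_{m}$ the summands are i.i.d.\ over $n'$ with means $P^{i*}_{m,t}$ and $P^{i*}_{m,t}Q^{ij*}_{m,t}$, respectively. If $m\neq l$ the conditioning on driver $(n,l)$ is vacuous by the cross-team independence assumed in the model; if $m=l$ it merely adds the deterministic contribution of driver $(n,l)$ herself to both counts (so that $K^{i}_{l,t}\geq 1$ and $K^{ij}_{l,t}\geq 1$), leaving the other $N_{l}-1$ summands i.i.d.

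Next I would invoke the strong law of large numbers. Because $P^{i*}_{l,t}Q^{ij*}_{l,t}>0$ is assumed for every team, so in particular $P^{i*}_{m,t}>0$ and $Q^{ij*}_{m,t}>0$, and the ratios $N_{m}/N$ are held fixed (hence $N_{m}\to\infty$), we obtain $K^{i}_{m,t}/N_{m}\to P^{i*}_{m,t}$ and $K^{ij}_{m,t}/N_{m}\to P^{i*}_{m,t}Q^{ij*}_{m,t}$ almost surely, and therefore $\log(K^{ij}_{m,t}/K^{i}_{m,t})\to\log Q^{ij*}_{m,t}$ almost surely by the continuous mapping theorem. To move this limit inside the expectation I would establish uniform integrability of the family $\{\log(K^{ij}_{m,t}/K^{i}_{m,t})\}_{N}$: the random variable is nonpositive since $K^{ij}_{m,t}\leq K^{i}_{m,t}$, and it is bounded below by $-\log N_{m}$ whenever $K^{ij}_{m,t}\geq 1$ (the degenerate cases being absorbed by the $\log(\tfrac{0}{0})=0$ convention), so at truncation level $M$ only teams with $N_{m}>e^{M}$ contribute to the tail; for such a team the event $\{-\log(K^{ij}_{m,t}/K^{i}_{m,t})>M\}$ forces $K^{ij}_{m,t}<e^{-M}N_{m}$, a deviation strictly below the mean $P^{i*}_{m,t}Q^{ij*}_{m,t}N_{m}$ once $M$ is large, whose probability decays exponentially in $N_{m}$ by a Chernoff bound and so outweighs the $\log N_{m}$ prefactor. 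Uniform integrability together with the almost-sure limit (Vitali's convergence theorem) yields $E[\log(K^{ij}_{m,t}/K^{i}_{m,t})\mid i_{n,l,t}=i,\,j_{n,l,t}=j]\to\log Q^{ij*}_{m,t}$; summing over $m$ with weights $a_{lm}$ and cancelling the $\log R^{ij}_t$ terms gives the claim.

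The step I anticipate as the main obstacle is precisely this interchange of limit and expectation: $\log$ is unbounded near $0$, and for the cross terms ($m\neq l$) the count $K^{ij}_{m,t}$ can be atypically small, or at the boundary zero, with small but positive probability, so the uniform-integrability estimate must pair the deterministic $\log N_{m}$ bound with an exponential concentration inequality and rely on the stated convention to neutralize degenerate realizations. The remaining ingredients, namely the linearity reduction, the split into the $m=l$ and $m\neq l$ cases, and the law of large numbers, are routine once the independence structure of the driver population is written out explicitly.
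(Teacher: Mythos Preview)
Your proposal is correct and follows the same overall strategy as the paper: reduce by linearity to a single team $m$, invoke the law of large numbers for the counts, and then justify the interchange of limit and expectation for the unbounded function $\log$. The execution differs in two places. First, the paper splits $\log(K^{ij}_{m,t}/K^{i}_{m,t})$ into $\log(K^{ij}_{m,t}/N_m)-\log(K^{i}_{m,t}/N_m)$ and handles each term separately, whereas you work with the ratio directly; your route is slightly cleaner because the ratio is automatically in $[0,1]$ so only the lower tail matters. Second, for the tail the paper truncates $\log x=\max\{\log x,\log\delta\}+1_{\{x\le\delta\}}(\log x-\log\delta)$, uses bounded convergence on the first piece, and controls the second by Chebyshev's inequality (the $1/N$ decay already beats the $\log N$ growth), while you argue uniform integrability via a Chernoff bound and then invoke Vitali. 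Your concentration estimate is stronger than necessary---Chebyshev suffices---but it works, and the uniform-integrability formulation is arguably more transparent. Both arguments share the same residual subtlety for $m\neq l$: the event $\{K^{ij}_{m,t}=0,\ K^{i}_{m,t}>0\}$ has positive probability and yields $-\infty$, which the $\log(0/0)=0$ convention alone does not neutralize; in either proof this is absorbed into the exponentially (or polynomially) small tail term, so the conclusion is unaffected.
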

\begin{proof}
 See appendix \ref{appen1}.
\end{proof}
Therefore, in the large population limit, the optimal response of $(n,l)$-th driver is characterized by:
\begin{align}
 \argminB_{\{Q_{n,l}\}_0^{T-1}}  &\sum_{t=0}^{T-1} \sum_{i,j} P_{n,l,t}^i  Q_{n,l,t}^{ij}(C_{l,t}^{ij}+
 \sum_{m=1}^L a_{lm} \log \frac{Q_{m,t}^{ij*}}{R_{t}^{ij}}) \nonumber \\
 \text{s.t } \quad  &\sum_{j}  Q_{l,t}^{ij} =1\quad \forall i, \label{realgame}
\end{align}
where the probability distribution $P_{n,l,t}$ propagates with respect to (\ref{dyn}). In what follows, we show that $Q^{*}$ is indeed an MFG of multi-team traffic routing game if the minimizer of (\ref{realgame}) is again $Q^{*}_{n,l}$ (That is, $Q^*$ is a fixed point). In the next subsection, we study auxiliary $L$-player game by which such a fixed policy is found.
\subsection{Auxiliary optimization problem }
\label{threeb}
Consider an auxiliary $L$-player game in which each team tries to find her optimum policy $Q^{ij*}_l$, the solution to:
\begin{equation}
\label{aux}
\begin{split}
 \min_{\{Q_l\}_0^{T-1}} & \sum_{t=0}^{T-1} \sum_{i,j} P_{l,t}^i \  Q_{l,t}^{ij}(C_{l,t}^{ij}+\sum_{m=1}^L a_{lm} \log \frac{Q_{m,t}^{ij}}{R_{t}^{ij}}) \\
 \text{s.t } \quad & \sum_{j}  Q_{l,t}^{ij} =1\quad \forall i. 
\end{split}
\end{equation}
Notice that in (\ref{realgame}) the logarithmic term is fixed, in contrast to (\ref{aux}) where it is an optimization variable. For each $t \in \mathcal{T} $, one can introduce value function associated with optimal control problem (\ref{aux}) as: \vspace*{-3mm}
\[V_{l,t} (P_{l,t})\!= \!\!\!\!
\min_{\{Q_l\}_0^{T-1}}\!\! \sum_{\tau=t}^{T-1} \sum_{i, j} P_{l,\tau}^i Q_{l,\tau}^{ij}(C_{l,\tau}^{ij}+\!\sum_{m=1}^L \!a_{lm} \log \frac{Q_{m,t}^{ij}}{R_{t}^{ij}}).\]
The value function satisfies the following coupled Bellman equations:
\begin{align}
\nonumber
 V_{l,t} (P_{l,t})= \min_{{Q}_{l,t}} \bigg\{ &\sum_{i, j}P_{l,t}^i \  Q_{l,t}^{ij}(C_{l,t}^{ij}+\sum_{m=1}^L a_{lm} \log \frac{Q_{m,t}^{ij}}{R_{t}^{ij}}) \\\label{Bellman}
&+ V_{l,t+1}({P}_{l,t+1})\bigg\},
\end{align}
 with $V_{l,T}(P_{l,T})=0 \quad \forall l, P_{l,T}$.
 The first main result of this paper is summarized by the following Theorem \ref{theorem1}. It states that this set of optimal control problems (\ref{aux}) is linearly solvable, which means the solution is obtained by solving a linear system.
\begin{theorem}
\label{theorem1}
Let $ \{\lambda_{l,t}\}_{t\in \mathcal{T}}$  be sequences of V-dimensional vectors, $\{\phi_{l,t}\}_{t\in \mathcal{T}}$ be sequences of V$\times$V-dimensional matrices, and $A$ be an invertible matrix whose $(l,m)$-th element is $a_{lm}$. Then, optimal policies $\{Q^{ij*}_{l,t}\}_{t\in \mathcal{T}}$ can be iteratively calculated by Algorithm \ref{alg1}. Moreover, the value functions for each $t \in \mathcal{T}$ are
  \[ V_{l,t}(P_{l,t})=\sum_{i}P^{i}_{l,t} (-a_{ll}-\lambda^i_{l,t}).\]
\end{theorem}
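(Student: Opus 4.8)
The plan is a backward induction over $t$ in which (\ref{aux}) is treated as an $L$-player game and its Nash equilibrium is built explicitly: $V_{l,t}(P_{l,t})$ is team $l$'s best-response value-to-go when every other team $m\neq l$ uses its equilibrium policy $Q^{*}_{m}$. I posit the affine-in-$P$ ansatz $V_{l,t}(P_{l,t})=\sum_i P^i_{l,t}(-a_{ll}-\lambda^i_{l,t})$, with terminal data $\lambda^i_{l,T}=-a_{ll}$ forced by $V_{l,T}\equiv 0$, and show it propagates backward through the Bellman equations (\ref{Bellman}).

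For the inductive step I substitute the ansatz for $V_{l,t+1}$ into (\ref{Bellman}) and use (\ref{dyn}) to write $V_{l,t+1}(P_{l,t+1})=\sum_{i,j}P^i_{l,t}Q^{ij}_{l,t}(-a_{ll}-\lambda^j_{l,t+1})$. The bracketed cost becomes a nonnegatively weighted sum $\sum_i P^i_{l,t}[\,\cdot\,]_i$ whose $i$-th summand depends only on the row $Q^{i\cdot}_{l,t}$, and since the constraints $\sum_j Q^{ij}_{l,t}=1$ decouple across nodes, the minimization over $Q_{l,t}$ splits into one problem per node $i$; the hypothesis $P^{i*}_{l,t}Q^{ij*}_{l,t}>0$ keeps the logarithmic terms finite along the equilibrium so this reduction is meaningful wherever $P^i_{l,t}>0$. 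In the game (\ref{aux}) team $l$ controls only $Q_{l,t}$, so the cross terms $\sum_{m\neq l}a_{lm}\log(Q^{ij}_{m,t}/R^{ij}_t)$ are exogenous constants for it, and each per-node subproblem reduces to a KL-regularized linear program over the simplex, $\min_{Q^{i\cdot}_{l,t}\in\Delta}\sum_j Q^{ij}_{l,t}\bigl(\tilde c^{ij}_{l,t}+a_{ll}\log(Q^{ij}_{l,t}/R^{ij}_t)\bigr)$, which is strictly convex since $a_{ll}>0$. Its minimizer is the Gibbs form $Q^{ij*}_{l,t}\propto R^{ij}_t\exp(-\tilde c^{ij}_{l,t}/a_{ll})$ and its optimal value is $-a_{ll}\log\bigl(\sum_j R^{ij}_t\exp(-\tilde c^{ij}_{l,t}/a_{ll})\bigr)$; summing over $i$ against $P^i_{l,t}$ reproduces the affine form and identifies $\lambda^i_{l,t}$ with a constant shift of a log-partition function, closing the induction up to the coupling among teams.

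It remains to resolve that coupling and exhibit the linear solve. Reinserting the Gibbs form of $Q^{ij*}_{l,t}$, taking logarithms and rearranging gives, for every team $l$ and all $(i,j)$ with $P^{i*}_{l,t}Q^{ij*}_{l,t}>0$,
\[
\sum_{m=1}^{L} a_{lm}\,\log\frac{Q^{ij}_{m,t}}{R^{ij}_t}=\lambda^j_{l,t+1}-\lambda^i_{l,t}-C^{ij}_{l,t}.
\]
Since $A=[a_{lm}]$ is invertible, this determines the vector $\bigl(\log(Q^{ij}_{m,t}/R^{ij}_t)\bigr)_m$ via $A^{-1}$; imposing $\sum_j Q^{ij}_{m,t}=1$ then yields an equation for $\lambda$. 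After the change of variables $\mu^i_{m,t}=\sum_l (A^{-1})_{ml}\lambda^i_{l,t}$ and the substitution $g^i_{m,t}=e^{\mu^i_{m,t}}$, this normalization collapses to the backward \emph{linear} recursion $g^i_{m,t}=\sum_j W^{ij}_{m,t}\,g^j_{m,t+1}$ with $W^{ij}_{m,t}=R^{ij}_t\exp\bigl(-\sum_l (A^{-1})_{ml}C^{ij}_{l,t}\bigr)$ and $g^i_{m,T}$ fixed by $\lambda^i_{l,T}=-a_{ll}$ --- exactly the linear system run in Algorithm \ref{alg1}. Unwinding it recovers $\lambda_{l,t}$, hence $\phi_{l,t}$ and $Q^{ij*}_{l,t}$, and a standard verification argument then confirms that the $(\lambda,Q^{*})$ so produced satisfy (\ref{Bellman}) and the simplex constraints, so $V_{l,t}$ has the claimed form and $Q^{*}$ is optimal.

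The step I expect to be the main obstacle is this last one: the normalization conditions look transcendental in $\lambda$, and the heart of the argument is showing that passing through $A^{-1}$ and then exponentiating genuinely linearizes them --- this is what earns the label ``linearly solvable'' and is where invertibility of $A$ and the logarithmic form of the tax (\ref{tax-mech}) are both indispensable. Two lesser points require care: ensuring all logarithm arguments stay strictly positive so the recursion is well posed (i.e.\ $g^i_{m,t}>0$, under the lemma's hypothesis and mild conditions on $R$), and noting that solving the best-response relations for all teams \emph{simultaneously} is what certifies a Nash equilibrium of (\ref{aux}) rather than a mere collection of single-team optima.
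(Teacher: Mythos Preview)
Your argument is essentially the paper's: both proceed by backward induction on the Bellman equation (\ref{Bellman}), derive the per-node first-order (stationarity) conditions for each team, stack them across $l$ and invert $A$ to solve for $\log(Q^{ij*}_{m,t}/R^{ij}_t)$, and then use the simplex normalization to pin down $\lambda^i_{l,t}$ and recover the affine value function. The only presentational difference is that you phrase the per-node step as a KL-regularized problem with Gibbs minimizer and then pass to exponential coordinates to display the recursion $g^i_{m,t}=\sum_j W^{ij}_{m,t}\,g^j_{m,t+1}$ explicitly, whereas the paper works directly with the Lagrangian and leaves that linearity implicit in the log-sum-exp of Algorithm~\ref{alg1}.
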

\begin{proof}
See appendix \ref{appen2}.
\end{proof}
\begin{algorithm}
    \caption{Optimal Strategy Computation for T-stage Road Traffic Game}
   \label{alg1}
  \begin{algorithmic}[1]
   
    \STATE \textbf{Initialize} $\phi^{ij}_{l,T-1}=C^{ij}_{l,T-1}$ for all $i,j \in \mathcal{V}$
    \FOR{$t= T-1, T-2, \dots, 0$}
    
    \STATE for all $i,j \in \mathcal{V}$, compute:
    \[M^{ij}_t=\begin{bmatrix}
    M^{ij}_{1,t}\\
    \vdots\\
    M^{ij}_{L,t}
    \end{bmatrix} := A^{-1} \begin{bmatrix}
    -a_{11}-\phi^{ij}_{1,t}\\
    \vdots\\
    -a_{LL}-\phi^{ij}_{L,t}
    \end{bmatrix} \]
 
     \STATE for every $i \in \mathcal{V}$, compute:  
     \[\Lambda^{i}_{t}=
     \begin{bmatrix}
     \lambda^{i}_{1,t}\\
     \vdots\\
     \lambda^{i}_{L,t}
     \end{bmatrix}:=
     A \begin{bmatrix}
     \log(\sum_j R_{t}^{ij} \exp{M^{ij}_{1,t}})\\
     \vdots\\
     \log(\sum_j R_{t}^{ij} \exp{M^{ij}_{L,t}})
     \end{bmatrix}
      \]
      
     \STATE for every $i,j \in \mathcal{V}$, compute the optimal policies:
      \[ Q^{ij*}_{l,t}=R^{ij}_{l,t} \exp{(M^{ij}_{l,t}-\sum_m A^{-1}_{l,m}\lambda^{i}_{m,t})}\]
      \[ \phi^{ij}_{l,t-1}=C^{ij}_{l,t-1}-a_{ll}-{\lambda}^j_{l,t}\]
      
      \ENDFOR
      \STATE \textbf{return} $\{Q^{ij*}_{l,t}\}_{t\in \mathcal{T}}$, $\{\lambda^{i}_{l,t}\}_{t \in \mathcal{T}}$ and$ \{\phi^{ij}_{l,t}\}_{t \in \mathcal{T}}$.

  \end{algorithmic}
  
\end{algorithm}

We stress that Algorithm~\ref{alg1} is linear in $\phi$ and $\lambda$ and thus the optimal solution can be pre-computed backward in time.

\subsection{Equilibrium in the large population limit}
\label{threec}
Here, we find the equilibrium point of the game with a large number of drivers. It is noteworthy that $\Pi^{ij}_{n,l,t}$, as proved in \cite{tanaka2018linearly}, is independent of the $(n,l)$-th driver's own strategy $Q^{ij}_{n,l,t}$, even for finite $N$. In this game, each strategic driver wants to minimize her cost function, given by 
\begin{equation*}
    J_{(n,l)}(Q_{n,l},Q_{-(n,l)})=\sum_{t=0}^{T-1} \sum_{i, j} P_{n,l,t}^i  Q_{n,l,t}^{ij}(C_{n,l,t}^{ij}+\Pi^{ij}_{n,l,t}),
\end{equation*}
where $Q_{n,l}=\{Q_{n,l,t}\}_{t\in \mathcal{T}}$ denotes her policy and
$\{\Pi^{ij}_{n,l,t}\}_{t\in \mathcal{T}}$ is determined by
other drivers' policy $Q_{-(n,l)}=\{Q_{\tilde{n},\tilde{l},t}\}_{t\in \mathcal{T}, \forall (\tilde{n},\tilde{l})\neq (n,l)}$. 
\vspace*{2mm}
\begin{definition}
\label{def1}
The set of  strategies $\{Q^{NE}_{n,l}\}_{l=1, 2, \dots, L, n=1, 2, \dots, n_l}$ 
is said to be a NE if the following
inequalities hold.
\begin{equation*}
    \begin{split}
        J_{(n,l)}(Q_{n,l},Q^{NE}_{-(n,l)})
        \geq J_{(n,l)}(Q^{NE}_{n,l},Q^{NE}_{-(n,l)}) \quad \forall (n,l)\\
    \end{split}
\end{equation*}
\end{definition}
\vspace{2mm} 
As a candidate of NE in the limit of $N \rightarrow \infty$ , we consider $Q^{ij*}_{l,t}$, the solution to (\ref{aux}). To validate this guess, let's consider all drivers except $(n,l)$ adopt $Q^{ij*}_{l,t}$ and see if it statisfies the condition  in Definition~\ref{def1}. One can again apply dynamic programming and define the value function for (\ref{realgame}) as:
\begin{equation*}
\label{valuedef2}
\begin{split}
&\tilde{V}_{n,l,t} (P_{n,l,t})=\\
&\min_{ \{Q_{n,l}\}_0^{T-1}} \sum_{\tau=t}^{T-1} \sum_{i, j} P_{n,l,\tau}^i Q_{n,l,\tau}^{ij}(C_{l,\tau}^{ij}+\sum_{m=1}^L a_{lm} \log \frac{Q_{m,t}^{ij*}}{R_{t}^{ij}}),
\end{split}
\end{equation*}
and the associated Bellman equation will be
\begin{align}
\nonumber
& \tilde{V}_{n,l,t} (P_{n,l,t})=\min_{{Q}_{n,l,t}} \bigg\{\sum_{i, j}P_{n,l,t}^i  Q_{n,l,t}^{ij}(C_{l,t}^{ij}\\\label{Bellman2}
& +\sum_{m=1}^L a_{lm} \log \frac{Q_{m,t}^{ij*}}{R_{t}^{ij}})+ \tilde{V}_{n,l,t+1}({P}_{n,l,t+1})\bigg\}.
\end{align}
Note that $V$ and $\tilde{V}$ are value functions associated with different optimal control problems.
\begin{lemma}
\label{lemma2}
For all $t \in \mathcal{T}$ and any given $P_{n,l,t}$, $\tilde{V}_{n,l,t}(P_{n,l,t})=V_{l,t}(P_{n,l,t})$. 
Moreover, an arbitrary policy $\{{Q}_{n,l,t}\}_{t \in \mathcal{T}} \in \mathcal{Q}$ is an optimal solution to (\ref{realgame}).
\end{lemma}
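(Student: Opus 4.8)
The plan is to establish both assertions together by backward induction on $t$, leveraging the fact proved in Theorem~\ref{theorem1} that $V_{l,t}$ is an \emph{affine} function of the distribution, $V_{l,t}(P)=\sum_i P^i(-a_{ll}-\lambda^i_{l,t})$. The base case $t=T$ is immediate since $\tilde V_{n,l,T}\equiv 0\equiv V_{l,T}$. For the inductive step I would assume $\tilde V_{n,l,t+1}(P)=V_{l,t+1}(P)=\sum_j P^j(-a_{ll}-\lambda^j_{l,t+1})$ for every $P$, substitute the state update $P^j_{n,l,t+1}=\sum_i P^i_{n,l,t}Q^{ij}_{n,l,t}$ into the Bellman recursion~(\ref{Bellman2}), and collect the bracket multiplying $P^i_{n,l,t}Q^{ij}_{n,l,t}$ into a single per-stage weight $\tilde C^{ij}_{l,t}:=C^{ij}_{l,t}+\sum_m a_{lm}\log\bigl(Q^{ij*}_{m,t}/R^{ij}_t\bigr)$; note this is a \emph{constant} (the log term is frozen at $Q^{*}$, not a decision variable), so (\ref{realgame}) is an ordinary finite-horizon control problem with fixed per-stage cost.

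The crux is to show that $\tilde C^{ij}_{l,t}$ has a telescoping (discrete-potential) structure. Starting from the closed-form optimal policy of Algorithm~\ref{alg1}, $Q^{ij*}_{l,t}=R^{ij}_t\exp\bigl(M^{ij}_{l,t}-\sum_m A^{-1}_{l,m}\lambda^i_{m,t}\bigr)$, I would take logarithms, multiply by $a_{lm}$, sum over $m$, and use $A M^{ij}_t=[-a_{11}-\phi^{ij}_{1,t},\dots,-a_{LL}-\phi^{ij}_{L,t}]^\top$ together with $A A^{-1}=I$ (invertibility of $A$ being inherited from Theorem~\ref{theorem1}) to get $\sum_m a_{lm}\log\bigl(Q^{ij*}_{m,t}/R^{ij}_t\bigr)=-a_{ll}-\phi^{ij}_{l,t}-\lambda^i_{l,t}$. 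Inserting the $\phi$-recursion of Algorithm~\ref{alg1}, namely $\phi^{ij}_{l,t}=C^{ij}_{l,t}-a_{ll}-\lambda^j_{l,t+1}$ for $t\le T-2$ and $\phi^{ij}_{l,T-1}=C^{ij}_{l,T-1}$ from the initialization, this collapses to $\tilde C^{ij}_{l,t}=\lambda^j_{l,t+1}-\lambda^i_{l,t}$ for $t\le T-2$ and $\tilde C^{ij}_{l,T-1}=-a_{ll}-\lambda^i_{l,T-1}$.

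With this identity in hand the inductive step is mechanical. For $t\le T-2$ the Bellman objective becomes $\sum_{i,j}P^i_{n,l,t}Q^{ij}_{n,l,t}\bigl(\tilde C^{ij}_{l,t}-a_{ll}-\lambda^j_{l,t+1}\bigr)$, in which the two $\lambda^j_{l,t+1}$ terms cancel, leaving $\sum_{i,j}P^i_{n,l,t}Q^{ij}_{n,l,t}(-a_{ll}-\lambda^i_{l,t})$; for $t=T-1$ the same expression appears directly because $\tilde V_{n,l,T}\equiv 0$. Since $Q^{ij}_{n,l,t}$ enters only through $\sum_j Q^{ij}_{n,l,t}=1$, this quantity is \emph{independent of the control}, equals $\sum_i P^i_{n,l,t}(-a_{ll}-\lambda^i_{l,t})=V_{l,t}(P_{n,l,t})$, and hence the minimization in (\ref{Bellman2}) is vacuous --- every feasible $Q_{n,l,t}$ attains it. This closes the induction ($\tilde V_{n,l,t}=V_{l,t}$), and, invoking the same observation at every stage $t=0,1,\dots,T-1$, it shows that any $\{Q_{n,l,t}\}_{t\in\mathcal T}\in\mathcal Q$ realizes the optimal value of (\ref{realgame}) and is therefore optimal (in particular $Q^{*}_{n,l}$ is, which is exactly the fixed-point property required for the MFE interpretation discussed before the lemma).

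The step I expect to be the main obstacle is the algebraic reduction in the second paragraph: correctly juggling the matrix relations $AA^{-1}=I$ and $AM^{ij}_t=(\cdots)$ alongside the $\phi$-recursion, and treating the terminal stage $t=T-1$ separately, where $\phi$ is seeded from $C$ rather than produced by the recursion. One should also keep the standing positivity hypothesis $P^{i*}_{l,t}Q^{ij*}_{l,t}>0$ of Lemma~\ref{lemma1} in force throughout, so that every logarithm appearing in $\tilde C^{ij}_{l,t}$ is finite and the manipulations are legitimate.
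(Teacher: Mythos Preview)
Your proposal is correct and follows essentially the same backward-induction route as the paper's proof. The only difference is presentational: the paper cites the stationarity identity~(\ref{q-star}), namely $\phi^{ij}_{l,t}+\sum_m a_{lm}\log\bigl(Q^{ij*}_{m,t}/R^{ij}_t\bigr)=-a_{ll}-\lambda^i_{l,t}$, directly from the proof of Theorem~\ref{theorem1}, whereas you re-derive this same relation from the closed-form policy in Algorithm~\ref{alg1} and then recast it as the telescoping identity $\tilde C^{ij}_{l,t}=\lambda^j_{l,t+1}-\lambda^i_{l,t}$; either way the Bellman objective collapses to $\sum_i P^i_{n,l,t}(-a_{ll}-\lambda^i_{l,t})$, independent of $Q_{n,l,t}$.
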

\begin{proof} Starting from $t=T-1$, by substituting the explicit expression for $Q_{m,t}^{ij*}$  from (\ref{q-star}), we have:
\begin{subequations}
\label{Bellman_3}
\begin{align}
\nonumber
&\tilde{V}_{n,l,t}({P}_{n,l,t})\\ \nonumber
&=\min_{Q_{n,l,T-1}} \! \sum_{i, j} P_{n,l,T-1}^i  Q_{n,l,T-1}^{ij}(\phi_{l,T-1}^{ij}\!\!+\!\!
\sum_{m=1}^L\! a_{lm}\! \log \frac{Q_{m,t}^{ij*}}{R_{T-1}^{ij}}\!)\\
\nonumber
&=\min_{Q_{n,l,T-1}} \! \sum_{i, j} P_{n,l,T-1}^i  Q_{n,l,T-1}^{ij}(- a_{ll}-\lambda^i_{l,T-1})\\ \nonumber
&=\min_{Q_{n,l,T-1}} \! \sum_{i} P_{n,l,T-1}^i (- a_{ll}-\lambda^i_{l,T-1}) \underbrace{\sum_j Q_{n,l,T-1}^{ij}}_{=1}\\ \nonumber
&=\ V_{l,t} ({P}_{n,l,T-1})
\end{align}
\end{subequations}
 Similarly, the proof can be repeated inductively for all $t$. Since the final expression does not depend on $Q_{n,l,t}$, any allowable policy is a minimizer.
\end{proof}

This result (all feasible solutions are optimal) can be interpreted as the Wardrop's first principle, which states that at equilibrium,\textit{``the journey times on all the routes actually used are equal, and less than those which would be experienced by a single vehicle on any unused route''} \cite{correa2010wardrop}.
 
 \subsection{Finite number of drivers}
\label{gamedef}
 Here, we want to study the relation between  NE of the game in the limit of infinite number of drivers and actual game with  finite number of drivers.
\begin{definition}
A set of strategies $\{Q^{MFE}_{n,l,t}\}$ is said to be an MFE if the following conditions are satisfied.
\begin{itemize}
  \item They are symmetric i.e., 
  \begin{equation*}
 \begin{split}
     \{Q^{MFE}_{1,l,t}\}=\{Q^{MFE}_{2,l,t}\}=\dots=\{Q^{MFE}_{N_l,l,t}\}\quad \forall l,t\\
 \end{split}     
  \end{equation*}
  \item There exists a sequence $\{\epsilon_{N_l}\}$ a satisfying $\epsilon_{N_l} \searrow 0$ as
$N_l \rightarrow \infty$  with constant ratio of $\frac{N_l}{N}$ such that for every possible set of $(n,l)$ and any allowable ${Q}_{n,l,t} \in \mathcal{Q}$,
\begin{equation*}
\begin{split}
        &J_{(n,l)}(Q_{n,l},Q^{MFE}_{-(n,l)})+\epsilon_{N_l} \geq J_{(n,l)}(Q^{MFE}_{n,l},Q^{MFE}_{-(n,l)})
\end{split}
\end{equation*}
\end{itemize}
\end{definition}
\vspace{2mm}
 
 Theorem \ref{theo2} summarizes our next main result which implies that the solution to (\ref{aux}) is indeed MFE of the actual game.
\begin{theorem}
\label{theo2}
The symmetric strategy profile 
$Q^{ij}_{n,l,t} = Q^{ij*}_{l,t} \  \forall n\leq N_l$ where ${Q}^{ij*}_{l,t}$ are obtained
by Algorithm \ref{alg1}, is the MFE of the road traffic game.
\end{theorem}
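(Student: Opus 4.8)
The plan is to check the two defining properties of an MFE in turn. Symmetry holds by construction, since the candidate profile assigns to every driver of team $l$ the common policy $\{Q^{ij*}_{l,t}\}_{t\in\mathcal{T}}$ produced by Algorithm~\ref{alg1}. It therefore remains to produce a sequence $\epsilon_{N_l}\searrow 0$ (as $N\to\infty$ with the ratios $N_m/N$ fixed) for which $Q^{*}_{l}$ is an $\epsilon_{N_l}$-best response when every other driver of every team $m$ plays $Q^{*}_{m}$.

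Fix $(n,l)$ and freeze all other drivers at $Q^{*}$. I would first isolate the limiting problem: let $\Pi^{ij,\infty}_{n,l,t}:=\sum_{m=1}^{L}a_{lm}\log\frac{Q^{ij*}_{m,t}}{R^{ij}_{t}}$ be the charge identified in Lemma~\ref{lemma1}, and let $J^{\infty}_{(n,l)}(Q_{n,l})$ be the objective of (\ref{realgame}) evaluated at a feasible $Q_{n,l}$, with $P_{n,l,t}$ propagated by (\ref{dyn}). By Lemma~\ref{lemma2}, $J^{\infty}_{(n,l)}(Q_{n,l})=\tilde V_{n,l,0}(P_{l,0})=V_{l,0}(P_{l,0})$ for every feasible $Q_{n,l}$; in other words the infinite-population best-response cost is a constant that does not depend on the deviating policy and is attained (in particular) by $Q^{*}_{l}$.

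Next I would bound the finite-population cost against this limit, uniformly over deviations. Two facts enter. First, as observed after Definition~\ref{def1} and shown in \cite{tanaka2018linearly}, when all other drivers use $Q^{*}$ the conditional expectation $\Pi^{ij}_{n,l,t}=E[\pi^{ij}_{n,l,t}\mid i_{n,l,t}=i,\ j_{n,l,t}=j]$ is independent of $Q_{n,l}$ — the other drivers' positions are mutually independent of driver $(n,l)$ — hence it is a deterministic function of $(i,j,t)$ and the population sizes alone; call it $\Pi^{ij}_{n,l,t}(N)$. Second, Lemma~\ref{lemma1} gives $\Pi^{ij}_{n,l,t}(N)\to\Pi^{ij,\infty}_{n,l,t}$. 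As $\mathcal{V}\times\mathcal{V}\times\mathcal{T}$ is finite, $\delta_{N_l}:=\sum_{t=0}^{T-1}\max_{i,j}|\Pi^{ij}_{n,l,t}(N)-\Pi^{ij,\infty}_{n,l,t}|\to 0$. Since $J_{(n,l)}(\cdot,Q^{*}_{-(n,l)})$ and $J^{\infty}_{(n,l)}$ differ only through the charge term, and $\sum_{i,j}P^{i}_{n,l,t}Q^{ij}_{n,l,t}=1$ for each $t$, one gets $|J_{(n,l)}(Q_{n,l},Q^{*}_{-(n,l)})-J^{\infty}_{(n,l)}(Q_{n,l})|\le\delta_{N_l}$ for every feasible $Q_{n,l}$. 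Combining with the previous paragraph, for any feasible deviation $Q_{n,l}$,
\[
J_{(n,l)}(Q_{n,l},Q^{*}_{-(n,l)})\ \ge\ V_{l,0}(P_{l,0})-\delta_{N_l}\ \ge\ J_{(n,l)}(Q^{*}_{n,l},Q^{*}_{-(n,l)})-2\delta_{N_l},
\]
so $\epsilon_{N_l}:=2\delta_{N_l}$ (or its running supremum $\sup_{N'\ge N}2\delta_{N'_l}$, to make the sequence monotone) verifies the MFE inequality for every $(n,l)$.

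The delicate point is the uniform convergence of $\Pi^{ij}_{n,l,t}(N)$: Lemma~\ref{lemma1} is stated under $P^{i*}_{l,t}Q^{ij*}_{l,t}>0$, so one must check that the terms on which it is invoked carry positive weight, and also that a deviating driver cannot gain by routing through nodes that are never visited under $Q^{*}$ (where the frozen charge need not coincide with $\lim_N\Pi^{ij}_{n,l,t}(N)$). I would dispatch this by assuming the nominal policy satisfies $R^{ij}_{t}>0$ on every edge of $\mathcal{G}$ — which forces $Q^{ij*}_{l,t}>0$ and keeps all logarithms finite — together with the reachability hypothesis under which $P^{*}_{l,t}$ is supported on the nodes relevant to team $l$; the remaining degenerate terms contribute zero to both $J_{(n,l)}$ and $J^{\infty}_{(n,l)}$.
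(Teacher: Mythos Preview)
Your proposal is correct and follows exactly the route the paper intends: the paper's own proof is simply a pointer to \cite[Theorem~2]{tanaka2018linearly}, and you have reproduced that argument in the multi-team setting---verify symmetry, invoke Lemma~\ref{lemma2} to see that the limiting best-response value is constant across all deviations, use the independence of $\Pi^{ij}_{n,l,t}$ from the deviator's own policy together with Lemma~\ref{lemma1} and finiteness of $\mathcal{V}\times\mathcal{V}\times\mathcal{T}$ to get uniform convergence $\delta_{N_l}\to 0$, and conclude the $\epsilon$-Nash inequality with $\epsilon_{N_l}=2\delta_{N_l}$. Your caveat about the positivity hypothesis of Lemma~\ref{lemma1} is well placed and is the same regularity assumption implicitly carried over from \cite{tanaka2018linearly}.
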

\begin{proof}
The proof is similar to the proof provided in \cite[Theorem 2]{tanaka2018linearly}.
\end{proof}

\section{Numerical Illustartion}
\label{NI}
Here, we consider an example of routing game over a traffic network (a grid world with obstacles). Team 1 and 2 are concentrated in the origin cells (indicated by $1S$ and $2S$) at $t=0$.
For $t \in \mathcal{T} $, the travel cost for each driver is given by:
\[
  C^{ij}_{l,t}=\begin{cases}
   C_{l,\text{term}} \quad \quad \quad \quad \quad \text{if} \ j=i\\
   1+C_{l,\text{term}} \quad \quad \  \quad  \text{if} \  j \in \mathcal{V}(i) \text{ and $i\neq j$}\\
        100000+C_{l,\text{term}} \quad  \text{if} \  j \not\in \mathcal{V}(i) \text{ or $j$ is an obstacle}
            \end{cases}
\]
where $\mathcal{V}(i)$ contains the cell $i$ itself and its north, east, south, and west neighboring cells. We
introduce $C_{\text{term}} = 0$ if $t \neq T-1$ and $C_{\text{term}} = 10\sqrt{\text{dist}(j, D)}$ if $t = T-1$ to penalize drivers if they end up far from their targeted destination, where $\text{dist}(j, D)$ is the Manhattan distance between the driver’s final location $j$ and the destination cell (indicated by $1E$ and $2E$). As the desired distribution, we use $R^{ij}_t= \frac{1}{|\mathcal{V}(i)|}$ (uniform distribution) for each $i \in \mathcal{V}$ and $t \in \mathcal{T}$ to incentivize drivers to avoid concentrations.
\begin{figure}[thpb]
      \centering
      \includegraphics[scale=0.50]{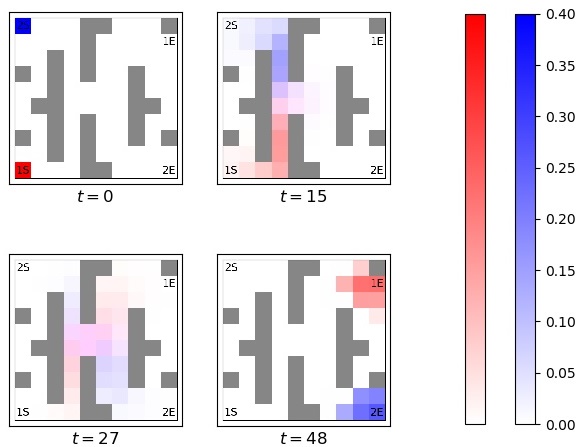}
      \caption{ Density propagation of team one (red) and team two (blue) for $a_{11}=a_{22}=3$ and $a_{12}=a_{21}=2$.}
      \label{fig1}
   \end{figure} 
   \begin{figure}[thpb]
      \centering
      \includegraphics[scale=0.50]{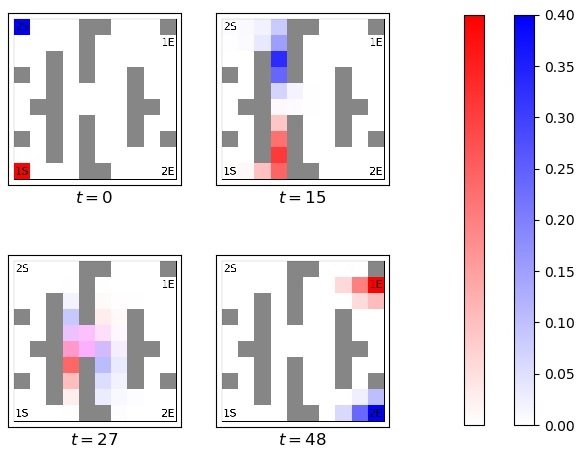}
      \caption{Density propagation of team one (red) and team two (blue) for $a_{11}=a_{22}=0.06$ and $a_{12}=a_{21}=0.04$.}
      \label{fig2}
   \end{figure} 
   
For various values of $A$, the optimal policy is computed based on Algorithm \ref{alg1} for $T=50$. Fig. \ref{fig1} ($a_{11}=a_{22}=3$ and $a_{12}=a_{21}=2$) and Fig. \ref{fig2} ($a_{11}=a_{22}=0.06$ and $a_{12}=a_{21}=0.04$) show the snapshot of population distributions of teams 1 and 2 (respectively in the red and blue) for intermediate time steps of $t=0, 15,27$, and $48$. As expected, when $a_{ij}$s are small and the fixed cost dominates, drivers will stay concentrated with higher level of congestion.
Conversely, when $a_{ij}$s are large and the congestion associated tax is dominant, they would prefer to spread more to lower the level of congestion.
\section{Conclusion and Future Work}
\label{conc}
In this paper, a log-population tax setting for the multi-team routing game is proposed. Under this toll mechanism, an approximation of the game in the large population limit is obtained. It is shown the NE of the approximated game can be efficiently computed by a linearly-solvable algorithm. Furthermore, it is shown that the NE of approximated game is indeed an MFE of a large but finite population game.

Providing a protocol to find the suitable values of mechanism's parameters ($R^{ij}_t$ and $a_{ij}$) is left unanswered, which is worth investigating in future work. Practical application of the results for congestion management in urban traffic networks also remains to be explored. We  also plan to study the proposed framework from the viewpoint of mechanism design theory. 

\section{ACKNOWLEDGMENTS}

The authors would like to thank Samuel R. Faulkner
at the University of Texas at Austin for his contributions to the numerical study in Section \ref{NI}.


\bibliographystyle{IEEEtran}
\bibliography{ref}

\appendix
\setlength{\belowdisplayskip}{1pt} \setlength{\belowdisplayshortskip}{1pt}
\setlength{\abovedisplayskip}{1pt} \setlength{\abovedisplayshortskip}{1pt}
\subsection{Proof of Lemma \ref{lemma1}}
\label{appen1}
Consider a single term $m\neq l$. Then:
\begin{equation*}
    \begin{split}
       &\mathbb{E} \Big[ \log( \frac{K^{ij}_{m,t}}{K^i_{m,t}})-
       \log R^{ij}_t\Big]=\\
       &\mathbb{E} \Big[ \log \frac{K^{ij}_{m,t}}{N_m}\Big]-
       \mathbb{E} \Big[ \log \frac{K^{i}_{m,t}}{N_m}\Big]- \log R^{ij}_t.
    \end{split}
\end{equation*}
It is elementary to check that $\mathbb{E} \Big[ \frac{K^{ij}_{m,t}}{N_m}\Big] =P^{i*}_{m,t} Q^{ij*}_{m,t}$ and $ \mathbb{E} \Big[ \frac{K^{i}_{m,t}}{N_m}\Big] =P^{i*}_{m,t}$. However, since $\log(.)$ is an unbounded function, we cannot directly deploy continuous mapping theorem \cite{van2000asymptotic}. In the following, we show that if the general random variable $\frac{K_N}{N} \xrightarrow{a.s.} p$ has bounded variance $\sigma^2_N$, then $ \lim_{N\rightarrow \infty } \mathbb{E} \log(\frac{K_N}{N}) \rightarrow \log(p)$.
Let $\delta$ be a number satisfying $0 < \delta < p$. Then, we can write $\log x = f_1(x) + f_2(x)$ where 
\begin{equation*}
    \begin{split}
        &f_1(x) = \max \{\log x, \log \delta\} \\ 
        &f_2(x) = 1_{\{x\leq\delta\}} \log x.
    \end{split}
\end{equation*}
We first show $\lim_{N\rightarrow\infty} \mathbb{E} f_1(\frac{K_N}{N})= \log p$. By the strong law of large numbers, we have $ \frac{K_N}{N} \xrightarrow{a.s} p $. By the continuous mapping theorem, $f_1(\frac{K_N}{N}) \xrightarrow{a.s.}
f_1(p)$. By the bounded convergence theorem, we have
\begin{equation*}
   \mathbb{E} f_1(\frac{K_N}{N}) \rightarrow  f_1(p)\quad  \text{as} \quad  N\rightarrow \infty .
\end{equation*}
Next, we show $\lim_{N\rightarrow \infty} \mathbb{E} f_2(\frac{KN}{N})= 0$. Notice that $\frac{K_N}{N}$
has mean $\mu_N = p$
and covariance $\sigma^2_N=\frac{p(1-p)}{N}$. We have
\begin{subequations}
    \begin{align}
    \nonumber
&\mathbb{E}\Big |f_2(\frac{K_N}{N}) \Big|= \mathbb{E} 1_{\{\frac{K_N}{N}\leq \delta\}}
\Big|\log(\frac{K_N}{N})- \log \delta \Big|\\
\label{a}
&\leq \mathbb{E}1_{\{\frac{K_N}{N}\leq \delta\}}
\Big(\Big|\log(\frac{K_N}{N})\Big|+ \Big|\log \delta\Big|\Big)\\
\label{b}
&\leq \mathbb{E}1_{\{\frac{K_N}{N}\leq \delta\}}
(|\log N|+ |\log \delta |)\\
\nonumber
&= \textbf{Pr}(\frac{K_N}{N}\leq \delta) (|\log N|+ |\log \delta|)\\
\nonumber
&= \textbf{Pr}(\frac{K_N}{N}-\mu_N\leq \delta-p) (|\log N|+ |\log \delta|)\\
\label{c}
&= \textbf{Pr}(|\frac{K_N}{N}-\mu_N|\geq p-\delta) (|\log N|+ |\log \delta|)\\
\label{d}
&\leq \frac{p(1-p)}{N( p-\delta)}(|\log N |+ |\log \delta|)\\
\nonumber
&\rightarrow 0 \quad \text{as} \quad N \rightarrow \infty.
     \end{align}
\end{subequations}
From (\ref{a}) to (\ref{b}), we used the fact that $1 \leq K_N \leq N$ and thus $|\log \frac{K_N}{N}|$ is maximized when $K_N = 1$. From (\ref{c}) to (\ref{d}), the Chebyshev inequality
\begin{equation*}
    \begin{split}
    \textbf{Pr}(|\frac{K_N}{N}-\mu_N \geq \epsilon|)\leq \frac{\sigma^2_N}{\epsilon}
    \end{split}
\end{equation*}
is used with $\epsilon =p-\delta$
. Finally,
\begin{equation*}
    \begin{split}
&\lim_{N\rightarrow \infty} \mathbb{E} \log \frac{K_N}{N}=\\
&\lim_
{N\rightarrow \infty} \mathbb{E} f_1(\frac{K_N}{N})+
\lim_{N\rightarrow \infty}
\mathbb{E} f_2(\frac{K_N}{N})= \log p.
 \end{split}
\end{equation*}
 For $m=l$, the same method can be repeated by $\mu_N=p-\frac{1}{N}$ and $\sigma^2_N=\frac{p(1-p)}{N-1}$.
\subsection{Proof of Theorem \ref{theorem1}}
\label{appen2}
For the base step of backward induction, let's define $\phi^{ij}_{l,T-1}= C^{ij}_{l,T-1}$. The Bellman equation (\ref{Bellman}) for $t=T-1$ reduces to:
\begin{align}
\nonumber\min_{Q_{l,T-1}^{ij}} & 
 \sum_{i,j} P_{l,T-1}^i \  Q_{l,T-1}^{ij}(\phi_{l,T-1}^{ij}+\sum_{m=1}^L a_{lm} \log \frac{Q_{m,T-1}^{ij}}{R_{T-1}^{ij}}) \\\label{base}
 \text{s.t } \quad & \sum_{j}  Q_{l,T-1}^{ij} =1\quad \forall i.
\end{align}
The associated Lagrangian is:
\begin{equation*}
\label{Lagrangian}
\begin{split}
&\mathcal{L}_{l,T-1}(Q^{ij}_{l,T-1}) =\\
&\sum_{i, j}P_{l,T-1}^i  Q_{l,T-1}^{ij}(\phi_{l,T-1}^{ij}+\sum_{m=1}^L a_{lm} \log \frac{Q_{m,T-1}^{ij}}{R_{T-1}^{ij}})\\
& +\sum_{i} P^i_{l,T-1} \lambda^i_{l,T-1}(\sum_{j} Q^{ij}_{l,T-1} -1).
\end{split}
\end{equation*}
The optimal solution ${Q}^{*}$ satisfies the stationarity condition:
\begin{equation}
\label{stationarity}
\begin{aligned}
\frac{\partial \mathcal{L}_{l,T-1}}{\partial{Q}^{ij}_{l,T-1}}
\Big|_{Q^{*}}&\!\!\!\!={P}^i_{l,T-1}(\phi_{l,T-1}^{ij}+\!\sum_{m=1}^L a_{lm} \log \frac{Q_{m,T-1}^{ij*}}{R_{T-1}^{ij}}\!)\\
&+ {P}^i_{l,T-1} a_{ll}+ P^i_{l,T-1} \lambda^i_{l,T-1}=0.
\end{aligned}
\end{equation}
Equivalently:
\begin{equation*}
\label{st_3}
\begin{bmatrix} 
\log(\frac{Q_{1,T-1}^{ij*}}{R_{T-1}^{ij}})\\
\vdots\\
\log(\frac{Q_{L,T-1}^{ij*}}{R_{T-1}^{ij}})
\end{bmatrix}=\overbrace{ A^{-1}
\begin{bmatrix}
-a_{11}-\phi^{ij}_{1,T-1}\\
\vdots\\
-a_{LL}-\phi^{ij}_{L,T-1}
\end{bmatrix}}^{M^{ij}_{T-1}}-A^{-1}\overbrace{
\begin{bmatrix}
\lambda^{i}_{1, T-1}\\
\vdots\\
\lambda^{i}_{L,T-1}
\end{bmatrix}}^{\Lambda^{i}_{T-1}}.
\end{equation*}
 If we exponentiate both sides element-wise, we have:
 \begin{equation}
 \label{qq}
     Q_{l,T-1}^{ij*}=R_{T-1}^{ij} \exp{M^{ij}_{l,T-1}}\exp{([- A^{-1}\Lambda^{i}_{T-1}]_l)}.
 \end{equation}
We can use the fact that $\sum_j Q_{l,T-1}^{ij*}=1$ to see
\begin{equation*}
[A^{-1}\Lambda^{i}_{T-1}]_l=
     \log(\sum_j R_{T-1}^{ij} \exp{M^{ij}_{l,T-1}}).
 \end{equation*}
Consequently,
\begin{equation*}
\Lambda^{i}_{T-1}= A \begin{bmatrix}
\log(\sum_j R_{T-1}^{ij} \exp{M^{ij}_{1,T-1}})\\
\vdots\\
\log(\sum_j R_{T-1}^{ij} \exp{M^{ij}_{L,T-1}})
\end{bmatrix},
 \end{equation*}
 By substituting $\Lambda^{i}_{T-1}$ in (\ref{qq}), $Q_{l,T-1}^{ij*}$ is obtained explicitly. Furthermore, (\ref{stationarity}) implies that:
\begin{align}
\label{q-star}
  \phi_{l,T-1}^{ij}+\sum_{m=1}^L a_{lm} \log \frac{Q_{m,T-1}^{ij*}}{R_{T-1}^{ij}}= -a_{ll}-\lambda^i_{l,T-1}.
\end{align}
 Therefore, by definition:
 \begin{subequations}
 \begin{align}
 \nonumber
&V_{l,T-1}(P_{l,T-1})\\\nonumber
&=\sum_{ij}P^{i}_{l,T-1} Q_{l,T-1}^{ij*} (\phi_{l,T-1}^{ij}+\sum_{m=1}^L a_{lm} \log \frac{Q_{m,T-1}^{ij*}}{R_{T-1}^{ij}})\\\nonumber
 &=\sum_{ij}P^{i}_{l,T-1} Q_{l,T-1}^{ij*} (-a_{ll}-\lambda^i_{l,T-1})\\\nonumber
 &=\sum_{i}P^{i}_{l,T-1}  (-a_{ll}-\lambda^i_{l,T-1})\underbrace{\sum_{j} Q_{l,T-1}^{ij*}}_{=1}\\ \nonumber
 & =\sum_{i}P^{i}_{l,T-1}  (-a_{ll}-\lambda^i_{l,T-1}).
 \end{align}
 \end{subequations}
 Plugging this value in the Bellman equation for $t=T-2$ leads to a similar optimization to (\ref{base}) with $\phi^{ij}_{l,T-2}=C^{ij}_{l,T-2}-a_{ll}-{\lambda}^j_{l,T-1}$ which completes the proof. 

\end{document}